\newcommand{\Gal}{\operatorname{Gal}}
\newcommand{\AGL}{\operatorname{AGL}}
\newcommand{\PSL}{\operatorname{PSL}}
\newcommand{\supp}{\operatorname{supp}}
\newcommand{\Tr}{\operatorname{Tr}}
\newcommand{\QQ}{{\mathbb Q}}
\newcommand{\CC}{{\mathbb C}}
\newcommand{\FF}{{\mathbb F}}
\newcommand{\ZZ}{{\mathbb Z}}
\newcommand{\eee}{\hfill$\Box$}
\newtheorem{theorem}{Theorem}
\newtheorem{corollary}{Corollary}
\newtheorem{lemma}{Lemma}
\newtheorem{remark}{Remark}
\begin{document}
\title{On Galois Groups of Prime Degree Polynomials with Complex Roots}
\author{Oz Ben-Shimol}
\maketitle
%
%
\begin{abstract}
    Let $f$ be an irreducible polynomial of prime degree $p\geq 5$ over $\QQ$,
    with precisely $k$ pairs of complex roots.
    Using a result of Jens H\"{o}chsmann (1999),
    we show that if $p\geq 4k+1$ then $\Gal(f/\QQ)$ is isomorphic to $A_{p}$ or $S_{p}$.
    This improves the algorithm for computing the Galois group of an irreducible polynomial of prime degree,
    introduced by A. Bialostocki and T. Shaska.
    \\ \indent
    If such a polynomial $f$ is solvable by radicals then its Galois group is a Frobenius group of degree p.
    Conversely, any Frobenius group of degree p and of even order, can be
    realized as the Galois group of an irreducible polynomial of degree $p$ over $\QQ$ having complex roots.
\end{abstract}
\maketitle
%
%
\section{Introduction}
A classical theorem in Galois theory says that  an irreducible
polynomial $f$ of prime degree $p\geq 5$ over $\QQ$ which has
precisely one pair of complex (i.e., non-real) roots, has the
symmetric group $S_{p}$ as its Galois group over $\QQ$ (see e.g.,
Stewart[18]). It is natural then to ask the following question: let
$k$ be a positive integer and $f$ an irreducible polynomial of prime
degree $p$ with precisely $k$ pairs of complex roots. What is its
Galois group $\Gal(f/\QQ)$?. If one tries to imitate the proof of
the classical theorem (i.e., the case $k=1$), one would find,
constructively, the subgroup of $S_{p}$ which is generated by the
$p$-cycle $(1 \ 2 \ ... \ p)$ and an involution $(a_1 \
a_2)\cdots(a_{2k-1} \ a_{2k})$. My unsuccessful attempts (so far) to
solve the problem in this way indicated that the difference between
the degree $p$ and the number $2k$ of the complex roots, need not be
"large" in order to obtain the alternating group $A_{p}$ at least
(i.e., $\Gal(f/\QQ)$ is isomorphic to $A_{p}$ or $S_{p}$).
\\ \indent
More general observations on such permutation groups brings us to a
well-known problem in the theory of permutation groups: let $G$ be a
$2$-transitive permutation group of degree $n$ which does not
contain the alternating group $A_{n}$, and let $m$ be its minimal
degree. Find the infimum for $m$ in terms of $n$.
\\ \indent
If $f$ is an irreducible polynomial of prime degree $p$ with $k>0$
pairs of complex roots, where $p>2k+1$, then its Galois group
$\Gal(f/\QQ)$ is $2$-transitive of degree $p$, with minimal degree
at most $2k$. Therefore, if $B(p)$ is a lower bound for the minimal
degree, then $\Gal(f/\QQ)$ necessarily contains the alternating
group $A_{p}$ when $2k\leq B(p)$. Thus, as $B(p)$ approaches the
infimum, the difference $p-2k$ gets smaller, as required.
\\ \indent
Returning to the group-theoretic problem stated above (for degree
$n$, not necessarily a prime), Jordan [10] showed that
$B(n)=\sqrt{n-1}+1$ is a lower bound for the minimal degree. A
substantial improvement of this bound is due to Bochert [3] who
showed that $B(n)=n/8$, and if $n>216$ then one has an even better
bound, namely $B(n)=n/4$. Proofs for the Jordan and Bochert
estimates can be found also in Dixon \& Mortimor [7], Theorem 3.3D
and Theorem 5.4A, respectively. More recently, Liebeck and Saxl
[11], using the classification of finite simple groups, have
proved $B(n)=n/3$.

Finally, H\"{o}chsmann [8], using a concept suggested by W.Knapp
which refines the notion of minimal degree in a natural way, namely,
\emph{$r$-minimal degree} $m_r(G)$, where $r$ is a prime divisor of
the order of the group $G$, gave some better estimates, which in the
worst case meet Liebeck and Saxl's bounds. Since the group we are
dealing with is of prime degree, and we have information about its
$2$-minimal degree, Hochmann's result serves us better than that of
Liebeck and Saxl.
\\ \indent
The paper of A.Bialostocki and T.Shaska [2] focuses on the practical
aspects of this theoretical problem, in the process of computing the
Galois group of prime degree polynomials over $\QQ$: 1. The existing
techniques, which are mainly based on a theorem of Dedekind (see Cox
[6, Theorem 13.4.5]), are expensive and many primes $p$ might be
needed in the process. 2. Polynomials in general have plenty of
complex roots. 3. Checking whether a polynomial has complex roots is
very efficient since numerical methods can be used. Therefore,
checking first if the polynomial has complex roots, and then use a
"good" bound for the difference between the polynomial's degree and
the number of its complex roots, makes the computation of its Galois
group much easier. However, they make a use of estimate due to
Jordan (summarized in Wielandt [19, page 42]), which is not sharp at
all (as the authors point out in their paper). In fact, Jordan's
bound holds for any primitive group of any finite degree - not
necessarily $2$-transitive of prime degree. In the present paper, we
improve their algorithm and discuss some theoretical aspects of the
subject.
%
%
%
\section{Galois groups of prime degree polynomials with complex roots}
A \emph{Frobenius group} is a transitive permutation group which
is not regular, but in which only the identity has more then one
fixed point. In other words, a Frobenius group $G$ is a transitive
permutation group on a set $\Omega$ in which $G_{\alpha}\neq 1$
for some $\alpha\in\Omega$, but $G_{\alpha}\cap G_{\beta}=1$ for
all $\alpha,\beta\in\Omega$, $\alpha\neq\beta$. It can be shown
that the set of elements fixing no letters of $\Omega$, together
with the identity, form a normal subgroup $K$ called the
\emph{Frobenius kernel} of $G$. Frobenius groups are characterized
as non-trivial semi-direct products $G=K\rtimes H$ such that no
element of $H\setminus\{1\}$ commutes with any element of
$K\setminus\{1\}$. Basic examples of Frobenius groups are the
subgroups of $\AGL_{1}(F)$ - the group of the $1$-dimensional
affine transformations of a field $F$, i.e. the group consisting
of the permutations of the form
$t_{\alpha,\beta}:\zeta\mapsto\alpha\zeta+\beta$, $\alpha\in
F^{*}$, $\beta,\zeta\in F$. Clearly, $\AGL_{1}(F)\cong F\rtimes
U$, where $U$ is a non-trivial subgroup of $F^{*}$. Identifying
$U$ with $\{0\}\rtimes U$, it is easy to verify that no nontrivial
subgroup of $U$ is normal in $\AGL_{1}(F)$. In particular, if
$F=\FF_{p}$ - the field of $p$ elements ($p$ prime), then
$\AGL_{1}(p):=\AGL_{1}(\FF_{p})\cong\FF_{p}\rtimes U$, where $U$
is a subgroup of $\FF_{p}^{*}$ (so $U$ is a cyclic of order $n$,
where $n\neq 1$ and $n$ divides $p-1$), is a Frobenius group of
degree $p$. The structure of a Frobenius group of degree $p\geq 5$
is described in the following theorem.
%
%
\begin{theorem} \emph{(Galois)}
Let G be a transitive permutation group of prime degree $p\geq 5$, and of order $>p$.
Then the following statements are equivalent:
\\
\textbf{i}. \ $G$ has a unique $p$-Sylow subgroup. \\
\textbf{ii}. \ $G$ is a solvable group. \\
\textbf{iii}. \ $G$ is isomorphic to a subgroup of $\AGL_{1}(p)$. \\
\textbf{iv}. \ G is a Frobenius group of degree $p$.
\end{theorem}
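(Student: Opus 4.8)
\emph{A plan for the proof.} I would prove the single cyclic chain of implications $(\mathbf{i})\Rightarrow(\mathbf{iii})\Rightarrow(\mathbf{iv})\Rightarrow(\mathbf{ii})\Rightarrow(\mathbf{i})$. Two elementary facts will be used repeatedly. First, since $G$ is a transitive subgroup of $S_{p}$, the orbit--stabiliser relation gives $p\mid|G|$, while (as $|G|$ divides $p!$ and $p^{2}\nmid p!$) we get $p^{2}\nmid|G|$; hence a $p$-Sylow subgroup of $G$ is cyclic of order $p$, generated by a $p$-cycle. Second, if $N\neq 1$ is a normal subgroup of a group acting faithfully and transitively on a set of prime size $p$, then the $N$-orbits are permuted transitively by $G$, so they have a common size dividing $p$; this size cannot be $1$ (else $N=1$), so $N$ itself is transitive.

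\textbf{$\mathbf{(i)}\Rightarrow\mathbf{(iii)}$.} If the $p$-Sylow subgroup $P$ is unique it is normal, so $G\leq N_{S_{p}}(P)$. Identifying $\Omega$ with $\FF_{p}$ so that $P$ is the translation group, one checks that the centraliser of $P$ in $S_{p}$ is $P$ itself and that $N_{S_{p}}(P)/P$ embeds in $\operatorname{Aut}(P)\cong\ZZ/(p-1)$; since the multiplications $\zeta\mapsto\alpha\zeta$ already realise all of $\operatorname{Aut}(P)$, this forces $N_{S_{p}}(P)\cong\AGL_{1}(p)$, hence $G\leq\AGL_{1}(p)$. \textbf{$\mathbf{(iii)}\Rightarrow\mathbf{(iv)}$.} In $\AGL_{1}(p)\cong\FF_{p}\rtimes\FF_{p}^{*}$ the translation subgroup $T$ is the unique $p$-Sylow subgroup, so $p\mid|G|$ forces $T\leq G$, and $T$ acts regularly. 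Writing $G=T\rtimes G_{0}$ with $G_{0}=G\cap\FF_{p}^{*}$ the stabiliser of $0$, the hypothesis $|G|>p=|T|$ yields $G_{0}\neq 1$, so $G$ is transitive but not regular; and as every affine map $\zeta\mapsto\alpha\zeta+\beta$ with $\alpha\neq 1$ has the single fixed point $\beta/(1-\alpha)$, only the identity of $G$ fixes more than one point. Thus $G$ is a Frobenius group of degree $p$.

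\textbf{$\mathbf{(iv)}\Rightarrow\mathbf{(ii)}$.} Let $K$ be the Frobenius kernel. Its nonidentity elements fix no point, so $K$ meets every point stabiliser trivially and acts semiregularly; since $K\neq 1$ and its orbits have common size $|K|$ dividing $p$, we get $|K|=p$, so $K$ is cyclic of order $p$. By the defining property of a Frobenius group, no nontrivial element of a point stabiliser centralises a nontrivial element of $K$, so $C_{G}(K)=K$; hence $G/K=G/C_{G}(K)$ embeds in $\operatorname{Aut}(K)\cong\ZZ/(p-1)$, which is cyclic. Therefore $1\triangleleft K\triangleleft G$ is a normal series with cyclic quotients, and $G$ is solvable. \textbf{$\mathbf{(ii)}\Rightarrow\mathbf{(i)}$.} A solvable group has a minimal normal subgroup $N$, necessarily elementary abelian, say a $q$-group. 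By the second remark $N$ is transitive of degree $p$, so $p\mid|N|$ and $q=p$; a transitive abelian permutation group is regular, so $|N|=p$. Since $p^{2}\nmid|G|$, the normal subgroup $N$ is a $p$-Sylow subgroup of $G$, and being normal it is the only one.

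The routine parts are the affine-arithmetic computations inside $\mathbf{(iii)}\Rightarrow\mathbf{(iv)}$ and the standard permutation-group lemmas behind the two opening remarks. The step I expect to need real care is the structural identification $N_{S_{p}}(P)\cong\AGL_{1}(p)$ used in $\mathbf{(i)}\Rightarrow\mathbf{(iii)}$, together with its Frobenius counterpart $C_{G}(K)=K$ used in $\mathbf{(iv)}\Rightarrow\mathbf{(ii)}$: both rest on pinning down $\operatorname{Aut}(\ZZ/p)$ and the ``no common fixed point'' condition exactly, and once these are in hand the remaining implications follow quickly.
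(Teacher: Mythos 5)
Your proof is correct, and it is necessarily a different route from the paper's, because the paper offers no argument at all for this theorem beyond the citation ``See Huppert [9]''; what you have produced is a self-contained cyclic chain $(\mathbf{i})\Rightarrow(\mathbf{iii})\Rightarrow(\mathbf{iv})\Rightarrow(\mathbf{ii})\Rightarrow(\mathbf{i})$, and each link checks out: the identification $N_{S_p}(P)\cong\AGL_1(p)$ (via $C_{S_p}(P)=P$ for the regular cyclic $P$ and the $N/C$ embedding into $\operatorname{Aut}(P)\cong\ZZ/(p-1)$), the affine fixed-point computation, the $N/C$ argument giving $G/K$ cyclic, and the minimal-normal-subgroup argument for $(\mathbf{ii})\Rightarrow(\mathbf{i})$ are all sound. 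The one imported ingredient worth flagging is in $(\mathbf{iv})\Rightarrow(\mathbf{ii})$: you take as given that the Frobenius kernel $K$ (the fixed-point-free elements together with the identity) is a normal subgroup. In general that is Frobenius's theorem and requires character theory; the paper's Section 2 preamble does assert it, so your appeal is legitimate in context, but in prime degree it can be avoided entirely by counting: since distinct point stabilizers meet trivially, the elements having a fixed point number exactly $p(|G_{\alpha}|-1)+1=|G|-p+1$, so there are exactly $p-1$ fixed-point-free elements; as each Sylow $p$-subgroup has order $p$ and its $p-1$ nontrivial elements are $p$-cycles, there can be only one Sylow $p$-subgroup, which gives $(\mathbf{iv})\Rightarrow(\mathbf{i})$ directly, and then solvability follows from $(\mathbf{i})\Rightarrow(\mathbf{iii})$ since subgroups of $\AGL_1(p)$ are metabelian. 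That variant makes the whole equivalence elementary, which is what the reference to Huppert delivers; your version buys a shorter argument at the cost of quoting Frobenius's theorem. Two small steps you should write out when polishing: the verification $C_{S_p}(P)=P$, and the passage from ``no nontrivial element of $G_{\alpha}$ centralizes a nontrivial element of $K$'' to $C_G(K)=K$ (write $g\in C_G(K)$ as $g=kh$ with $k\in K$, $h\in G_{\alpha}$, and use that $K$ is abelian).
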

%
%
\begin{proof}
    See Huppert [9].
\end{proof}
Let $G\cong\FF_{p}\rtimes U$, $U$  cyclic of order $n$, $n\neq 1$,
$n|p-1$, be a Frobenius group of degree $p$. Then it is customary
to denote $G=F_{pn}$. For example, the dihedral group
$D_{2p}=F_{p\cdot 2}$ is a Frobenius group of degree $p$. The
Frobenius groups $F_{p(p-1)}$ appear as Galois groups of the
polynomials $X^{p}-a\in\QQ[X]$, where
$a\in\QQ^{*}\setminus(\QQ^{*})^{p}$. For constructive realization
of Frobenius groups of degree $p$, see A.A.Bruen, C.Jensen and
N.Yui [4].
\\
\indent If $f$ is an irreducible polynomial of degree $p\geq 5$ over
$\QQ$, then its Galois group $G=\Gal(f/\QQ)$, as a permutation group
acting on the $p$-set consisting of the $p$ roots of $f$, is a
transitive group of order $p$ (if and only if $G$ contains a
$p$-cycle). Complex conjugation is a $\QQ$-automorphism of $\CC$
and, therefore, induces a $\QQ$-automorphism of the splitting field
of $f$. This leaves the real roots of $f$ fixed, while transposing
the complex roots. Therefore, if $f$ has a pair of complex roots,
then $|G|>p$. Furthermore, if, in addition, f has more then one real
root, then the complex conjugation has more then one fixed point. In
particular, $G$ is not a Frobenius group of degree $p$. By Theorem
1, $G$ is not solvable, thus, $f$ is not solvable by radicals. So we
have
%
%
\begin{corollary}
    Let $f$ be an irreducible polynomial of prime degree $p\geq 5$ over $\QQ$, which has a pair of complex roots.
    If $f$ is solvable by radicals then $\Gal(f/\QQ)$ is a Frobenius group of degree $p$,
    and $f$ has exactly one real root.
\eee
\end{corollary}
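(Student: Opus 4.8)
The plan is to chain together the facts already assembled in the paragraph preceding the statement and then squeeze out the extra information about real roots by a parity count. First I would fix the permutation setup: since $f$ is irreducible, $G=\Gal(f/\QQ)$ acts transitively on the $p$ roots of $f$, so $p$ divides $|G|$ and $G$ contains a $p$-cycle. Complex conjugation is a $\QQ$-automorphism of $\CC$, hence restricts to a $\QQ$-automorphism $c$ of the splitting field of $f$; on the root set it fixes each real root and swaps each complex root with its conjugate. Because $f$ has a pair of complex roots, $c$ transposes that pair and is therefore a nontrivial element of $G$, so $|G|>p$. This places us exactly in the hypotheses of Theorem 1.

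Next I would feed in the hypothesis: ``$f$ solvable by radicals'' means precisely that $G$ is a solvable group. Invoking the equivalence of (ii) and (iv) in Theorem 1 (applicable since $|G|>p$), $G$ is a Frobenius group of degree $p$. That already gives the first assertion.

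Finally, for the statement about real roots I would use the defining property of a Frobenius group: by definition $G_{\alpha}\cap G_{\beta}=1$ for distinct $\alpha,\beta$, so no nontrivial element fixes two distinct points of the domain. Since $c\neq 1$, it fixes at most one root; and the roots fixed by $c$ are exactly the real roots of $f$, so $f$ has at most one real root. To rule out the possibility of none, I would count parity: $c$ is an involution whose $2$-cycles are precisely the $k\geq 1$ pairs of complex conjugate roots, so the number of real roots is $p-2k$, which is odd because $p$ is odd; an odd nonnegative integer that is at most $1$ must equal $1$. Hence $f$ has exactly one real root.

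I do not expect a real obstacle: the corollary is essentially bookkeeping on top of Theorem 1 and the action of complex conjugation. The only point deserving a moment's care is the last one --- identifying the fixed set of $c$ with the real roots and using the parity of $p-2k$ --- since this is exactly what upgrades ``at most one real root'' to ``exactly one''.
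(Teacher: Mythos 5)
Your proof is correct and takes essentially the same route as the paper, whose argument is contained in the paragraph preceding the corollary: complex conjugation is a nontrivial element of $G$ forcing $|G|>p$, Theorem 1 turns solvability into the Frobenius property, and the Frobenius condition (no nontrivial element fixing two points) bounds the real roots. The only difference is presentational: you argue directly rather than by contraposition, and you spell out the parity step ($p-2k$ is odd, so ``at most one real root'' becomes ``exactly one''), which the paper leaves implicit.
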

Let $f$ be an irreducible polynomial of prime degree $p\geq 5$ and
with $k>0$ pairs of complex roots. By Corollary 1, if $p>2k+1$ then
$G=\Gal(f/\QQ)$ is not solvable. Our purpose is to show that if
$p\geq 4k+1$ then $G$ contains the alternating group (i.e., $G$
isomorphic to $A_{p}$ or to $S_{p}$).
%
%
\begin{theorem}\emph{(Burnside)}
    A non-solvable transitive permutation group of prime degree is $2$-transitive.
\end{theorem}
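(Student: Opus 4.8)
The plan is to combine Theorem~1 (Galois) with a character-theoretic study of the permutation action, thereby localizing the whole problem into a single step about Schur rings over a cyclic group of prime order. Let $G$ act transitively on a set $\Omega$ of prime size $p$ and suppose $G$ is not solvable; then $p\geq 5$ and $|G|>p$ (otherwise $G$ is cyclic), so by Theorem~1 a Sylow $p$-subgroup $P$ of $G$ is not normal in $G$. Since the Sylow $p$-subgroup of $S_{p}$ has order $p$, $P=\langle\sigma\rangle$ is generated by a $p$-cycle and acts regularly on $\Omega$; as the centralizer of a $p$-cycle in $S_{p}$ equals $\langle\sigma\rangle$, we get $C_{G}(P)=P$, so $N:=N_{G}(P)$ has $N/P$ embedding into $\operatorname{Aut}(P)\cong\FF_{p}^{*}$ and thus $N=P\rtimes H$ with $H$ cyclic of order $d\mid p-1$; moreover $n_{p}:=[G:N]\equiv 1\pmod p$ is at least $p+1$. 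Writing $\pi$ for the permutation character, $\pi=1_{G}+\theta$ with $\theta(1)=p-1$, and $G$ is $2$-transitive if and only if $\theta$ is irreducible --- this is what I would prove.

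First I would restrict characters to $P$. Since $P$ is regular, $\pi|_{P}$ is the regular character of $P$, so $\theta|_{P}$ is the sum of the $p-1$ distinct non-principal linear characters of $P$, each once. Hence $\theta$ is multiplicity-free, $\theta=\chi_{1}+\dots+\chi_{t}$ with the $\chi_{i}$ distinct irreducibles, no $\chi_{i}|_{P}$ contains $1_{P}$, and the ``eigenvalue sets'' $S_{i}:=\{\psi\in\operatorname{Irr}(P)\setminus\{1_{P}\}:\langle\chi_{i}|_{P},\psi\rangle\neq 0\}$ partition the $p-1$ non-principal characters, with $|S_{i}|=\chi_{i}(1)$. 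Moreover each $\chi_{i}|_{P}$ is fixed by conjugation by $N$ (an inner automorphism of $G$), so each $S_{i}$ is a union of $H$-orbits on $\operatorname{Irr}(P)\setminus\{1_{P}\}$, each of size $d$; in particular $d\mid\chi_{i}(1)$.

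The key reduction is a Galois descent. The character $\pi$ is $\ZZ$-valued, so $\Gal(\QQ(\zeta_{p})/\QQ)\cong\FF_{p}^{*}$ permutes $\{\chi_{1},\dots,\chi_{t}\}$. Identify $\operatorname{Irr}(P)\setminus\{1_{P}\}$ with $\FF_{p}^{*}$ so that the Galois action is multiplication, and recall that $\{\zeta_{p}^{a}\}_{a\in\FF_{p}^{*}}$ is a $\QQ$-basis of $\QQ(\zeta_{p})$, so distinct subsets of $\FF_{p}^{*}$ have distinct sums of the associated $p$-th roots of unity. If $\{\chi_{i}:i\in I\}$ is a single Galois orbit, then $\sum_{i\in I}\chi_{i}$, hence its restriction to $P$, is $\QQ$-valued, so $\bigcup_{i\in I}S_{i}$ is stable under multiplication by $\FF_{p}^{*}$; being non-empty, it is all of $\FF_{p}^{*}$, forcing $I=\{1,\dots,t\}$. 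Thus the $\chi_{i}$ are pairwise algebraically conjugate, of common degree $e:=(p-1)/t$; and since the automorphism $\zeta_{p}\mapsto\zeta_{p}^{b}$ carries $S_{i}$ to $bS_{i}$, the partition $\{S_{1},\dots,S_{t}\}$ is invariant under multiplication by every $b\in\FF_{p}^{*}$, so the $S_{i}$ are the cosets of the subgroup $\Gamma_{1}\leq\FF_{p}^{*}$ of order $e$. Consequently $G$ is $2$-transitive exactly when $t=1$.

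It remains to exclude the case $t\geq 2$, and this is the main obstacle. If $t\geq 2$, then, translating back to $\Omega$ (using $P$ to identify $\Omega$ with $\ZZ/p\ZZ$), the orbitals of $G$ are the diagonal together with the $t$ circulant relations ``$\beta-\alpha$ lies in a prescribed coset of $\Gamma_{1}$'', so $\operatorname{End}_{G}(\CC\Omega)$ is precisely the Schur ring over $\ZZ/p\ZZ$ attached to the proper subgroup $\Gamma_{1}<\FF_{p}^{*}$. I would finish by invoking Schur's theorem on Schur rings over a cyclic group of prime order --- each such ring comes from a subgroup of $\FF_{p}^{*}$, and the automorphism group of the corresponding configuration lies in $\AGL_{1}(p)$ --- which forces $G\leq\AGL_{1}(p)$ and hence, by Theorem~1, makes $G$ solvable, contradicting our hypothesis; alternatively one completes Burnside's original argument by decomposing each product $\chi_{i}\overline{\chi_{i}}$, whose restriction to $P$ is completely determined by the cyclotomic (intersection) numbers of the coset scheme, and comparing it with the Sylow datum $n_{p}\equiv 1\pmod p$, $n_{p}>1$. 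Either way $t=1$, so $\theta$ is irreducible and $G$ is $2$-transitive, as asserted.
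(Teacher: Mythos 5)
Your reduction is correct as far as it goes, and it is in fact the classical Burnside/Schur set-up (the paper itself gives no argument, only the references to Burnside and to Dixon--Mortimer, Cor.~3.5B): writing $\pi=1_G+\theta$, restricting to the regular Sylow $p$-subgroup to see that $\theta$ is multiplicity-free with constituents $\chi_1,\dots,\chi_t$ whose ``supports'' $S_i$ partition $\FF_p^*$, and using rationality of $\theta$ to show the $\chi_i$ form one Galois orbit and the $S_i$ are the cosets of a subgroup $\Gamma_1\leq\FF_p^*$ of index $t$ --- all of this is sound (modulo the harmless imprecision that one should let $\Gal(\QQ(\zeta_{|G|})/\QQ)$ act and observe that only its restriction to $\QQ(\zeta_p)$ matters). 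A secondary, fixable gap: the claim that the orbitals of $G$ are exactly the relations ``$\beta-\alpha\in$ a coset of $\Gamma_1$'' is asserted as if it were mere translation, but it needs an argument --- e.g.\ that the centralizer algebra has Fourier transform spanned by the indicators of $\{0\},S_1,\dots,S_t$, and a dimension/0--1 comparison then forces the suborbits to be the cosets of $\Gamma_1$.

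The genuine gap is the decisive final step, the exclusion of $t\geq 2$, which is precisely where all the difficulty of Burnside's theorem lives and which you do not actually prove. Schur's theorem on S-rings over $\ZZ/p\ZZ$ says that the basic sets of such a ring are the cosets of a subgroup of $\FF_p^*$; it does \emph{not} say that the automorphism group of the resulting configuration lies in $\AGL_1(p)$. That latter statement is essentially the theorem you are trying to prove: a transitive, non-$2$-transitive group of degree $p$ is exactly a group of automorphisms of such a nontrivial cyclotomic configuration, so invoking ``$\operatorname{Aut}$ of the configuration lies in $\AGL_1(p)$'' is circular unless you import an independent result such as the Carlitz--McConnel theorem on permutations of $\FF_p$ preserving residue classes of differences --- a genuinely different (and much later) theorem, not what you cite. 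Your alternative finish (``decompose $\chi_i\overline{\chi_i}$, whose restriction to $P$ is governed by the cyclotomic numbers, and compare with $n_p\equiv 1\pmod p$, $n_p>1$'') is only a gesture: no contradiction is derived, and extracting one is exactly the nontrivial content of Burnside's original character-theoretic proof. As it stands, the proposal reduces the theorem to its hard core and then assumes that core; to complete it you must either carry Schur's multiplier/S-ring argument all the way to $G\leq\AGL_1(p)$ (as in Wielandt or in Dixon--Mortimer, Theorem 3.5A, which is what the paper relies on) or cite a result that genuinely contains the needed conclusion.
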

Therefore, a transitive permutation group of prime degree is either $2$-transitive or a Frobenius group (see Theorem 1).
\begin{proof}
    See, Burnside [5], or Dixon \& Mortimor [7, Corollary 3.5B].
\end{proof}
Recall that the \emph{minimal degree} $m(G)$ of a permutation group
$G$ acting on a set $\Omega$ is the minimum of the supports of the
non-identity elements: \ $m(G):=\min\{|\supp(x)| : x\in G, x\neq
1\}$. Hence, G is a Frobenius group if and only if it is a
transitive permutation group with minimal degree $|\Omega|-1$, and
by Theorem 1, a transitive permutation group of prime degree $p\geq
5$ and of order $>p$ is not solvable if and only if it has minimal
degree $<p-1$.
\\ \indent
Now, for every prime divisor $r$ of $|G|$ we define the
\emph{minimal r-degree} $m_r(G)$ of $G$ to be the minimum of the
supports of the non-identity r-elements (that is, the non-identity
elements whose order is a power of $r$). Using elementary properties
of the minimal $r$-degrees and together with results based on the
classification of the finite simple groups, J. H\"{o}chsmann [8] has
proved
%
%
\begin{theorem}\emph{(H\"{o}chsmann)}
    Let $G$ be a $2$-transitive group of degree $n$ which does not contain the alternating group, and let $r$ be a
    prime divisor of $|G|$. Then \\
    \textbf{i.}  $m_r(G)\geq\frac{r-1}{r}\cdot n$ \ or \\
    \textbf{ii.} $G\geq\PSL(2,2^m)$, $r=2^{m}-1\geq 7$ is a Mersenne prime and $m_{r}(G)=r=n-2$ or \\
    \textbf{iii.} $G=PSp(2m,2)$, $n=2^{m-1}\cdot(2^{m}-1)$ with $m>2$, $r=2$ and \\
    $m_{r}(G)=\frac{2^{m-1}-1}{2^{m}-1}\cdot n\geq \frac{3}{7}\cdot n$. \\
    In any case $m_{r}(G)\geq\frac{r-1}{r+1}\cdot n$.
\end{theorem}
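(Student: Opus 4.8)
The plan is to walk through the classification of finite $2$-transitive permutation groups and, for each family, identify the $r$-element of smallest support; the uniform estimate in the last line of the statement then falls out by a short comparison of the three alternatives. I would split the argument into three stages. First the elementary reductions: for any permutation $x$ of an $n$-element set one has $|\supp(x)| = n - \mathrm{fix}(x)$, and every orbit of an $r$-element has $r$-power length, so $\mathrm{fix}(x) \equiv n \pmod r$ whenever $x$ is an $r$-element. Hence
\[
m_r(G) = n - \max\{\,\mathrm{fix}(x) : 1 \neq x \in G \text{ an } r\text{-element}\,\},
\]
and the inequality $m_r(G) \ge \frac{r-1}{r}n$ of alternative \textbf{i} is precisely the assertion that no nontrivial $r$-element of $G$ fixes more than $n/r$ points. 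I would also record the standard lemmas on the behaviour of $m_r$ under passing to subgroups and transitive constituents and under replacing $x$ by a power, together with the character reformulation: writing the permutation character of the $2$-transitive action as $1_G + \theta$ with $\theta$ irreducible of degree $n-1$, one has $\mathrm{fix}(x) = 1 + \theta(x)$, turning fixed-point bounds into bounds on $\theta$ at $r$-elements.

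Second, the classification input. By the classification of finite simple groups, a $2$-transitive group $G$ of degree $n$ not containing $A_n$ is either of affine type, with $G \le \AGL_d(p)$, $n = p^d$ and $G_0 := G \cap \mathrm{GL}_d(p)$ irreducible, or of almost simple type with socle from the explicit list: $\PSL_d(q)$ on points or hyperplanes of $\mathrm{PG}(d-1,q)$, $\mathrm{PSU}_3(q)$, $\mathrm{Sz}(q)$, $\mathrm{Ree}(q)$, $\mathrm{Sp}_{2m}(2)$ in its two actions of degrees $2^{m-1}(2^m\pm 1)$, $\PSL_2(11)$ of degree $11$, $A_7 \le \AGL_4(2)$, the Mathieu groups, and a handful of further sporadic examples. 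In the affine case a nontrivial $r$-element is $x = (v,A)$ with $A \in \mathrm{GL}_d(p)$ of $r$-power order, and its fixed-point set is empty or a coset of $\ker(A-I)$, of size $0$ or $p^{\dim\ker(A-I)}$. If $r = p$ then $A$ is unipotent: either $A = I$ and $x$ is a fixed-point-free translation, or $\dim\ker(A-I) \le d-1$ and $\mathrm{fix}(x) \le p^{d-1} = n/r$. If $r \neq p$ then $A \neq I$ has order divisible by $r$; over $\overline{\FF}_p$ the complement of the $1$-eigenspace of $A$ has dimension at least $e := \mathrm{ord}_r(p)$ (it is a union of full Galois orbits of roots of unity of $r$-power order, and there is at least one), so $\mathrm{fix}(x) \le p^{d-e} \le p^d/r = n/r$ because $r \mid p^e - 1 < p^e$. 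Thus the affine case always lands in alternative \textbf{i}.

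Third, the almost simple case, which I would treat family by family from known conjugacy-class data: for $\PSL_d(q)$ on projective points the fixed points of an $r$-element are the points of its $1$-eigenspace and are bounded just as above; for $\mathrm{Sp}_{2m}(2)$ the extremal $2$-elements are transvections, whose support gives exactly the value in alternative \textbf{iii}; for $\PSL_2(2^m)$ a generator of the split torus has order dividing the Mersenne prime $r = 2^m-1$ and fixes only the two torus-fixed points, so $m_r = n-2 = r$ as in \textbf{ii}; the small and sporadic cases are finite checks. Granting \textbf{i}--\textbf{iii}, the last line is then immediate: in case \textbf{i}, $\frac{r-1}{r} \ge \frac{r-1}{r+1}$; in case \textbf{ii}, $n = r+1$ and $m_r = r = n-2 \ge r-1 = \frac{r-1}{r+1}n$; in case \textbf{iii}, $r = 2$ and $\frac{2^{m-1}-1}{2^m-1} \ge \frac37 > \frac13 = \frac{r-1}{r+1}$ for $m > 2$.

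The main obstacle is the almost simple part of the third stage: pinning down exactly which socles, primes $r$ and degrees produce the genuine exceptions \textbf{ii} and \textbf{iii}, and certifying that every other $2$-transitive almost simple group already satisfies the clean bound $m_r(G) \ge \frac{r-1}{r}n$, needs detailed information (element orders, fixed-point counts, Brauer characters) about the finite classical groups and the sporadic groups. By contrast the affine case and the final numerical comparison are routine.
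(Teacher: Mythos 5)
Note first that the paper does not prove this statement at all: it is quoted verbatim from H\"ochsmann [8], and the surrounding text only records that the proof uses elementary properties of minimal $r$-degrees together with the CFSG-based classification of $2$-transitive groups. Your outline follows exactly that strategy (reduce to bounding fixed points of $r$-elements, run through the classification of $2$-transitive groups, split into affine and almost simple type), and your affine-type argument and the final numerical comparison of the three alternatives are essentially correct and complete. But as a proof attempt it has a genuine gap, and you name it yourself: the almost simple case is the entire substance of the theorem, and you only gesture at it. Verifying $m_r(G)\geq\frac{r-1}{r}n$ for every almost simple $2$-transitive group and every prime $r$ dividing $|G|$ (not just the defining characteristic, and not just for the socle but for all $G$ up to the full automorphic extensions, e.g.\ ${\rm P\Gamma L}(2,2^m)$), and certifying that the \emph{only} failures are the two listed families -- $\PSL(2,2^m)$ with $r=2^m-1\geq 7$ a Mersenne prime, and $\PSp(2m,2)$ with $r=2$ in just one of its two $2$-transitive actions, of degree $2^{m-1}(2^m-1)$ -- requires detailed fixed-point data for unitary, Suzuki, Ree, symplectic and Mathieu groups and the sporadic degrees. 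That case analysis is precisely what H\"ochsmann's paper supplies, and without it the statement is not proved; isolated spot checks (transvections in $\PSp(2m,2)$, split-torus elements in $\PSL(2,2^m)$) show the exceptional values are attained but not that nothing else is exceptional, nor do they explain why you may exclude, say, $r=2^m-1=3$ (where $\PSL(2,4)\cong A_5$ falls under the excluded alternating case).

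Two smaller points. In your final verification of the uniform bound you write ``in case \textbf{ii}, $n=r+1$''; in fact $n=2^m+1=r+2$ (consistent with $m_r=n-2=r$, which is incompatible with $n=r+1$), and the correct computation is $\frac{r-1}{r+1}(r+2)=r-\frac{2}{r+1}<r=m_r(G)$. Also, in case \textbf{iii} you should check against $\frac{r-1}{r+1}n=\frac13 n$, which your inequality $\frac{2^{m-1}-1}{2^m-1}\geq\frac37$ does cover, but the claim that the extremal $2$-elements are transvections and that only the degree $2^{m-1}(2^m-1)$ action is exceptional is again asserted, not argued.
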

An immediate consequence (in fact, a special case) of this theorem
is
%
%
\begin{corollary}
    Let $G$ be a $2$-transitive group of prime degree $p$ which does not contain the alternating group. Then
    $m_2(G)\geq\frac{p}{2}$. \eee
\end{corollary}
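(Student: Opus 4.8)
The plan is to apply Theorem 3 (H\"ochsmann) directly with the prime $r=2$ and degree $n=p$, and then to observe that the two exceptional conclusions (ii) and (iii) of that theorem cannot occur when $n$ is prime and $r=2$.

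First I would check that $2$ actually divides $|G|$, so that Theorem 3 is applicable with the choice $r=2$. Since $G$ is $2$-transitive of degree $p$, its order is divisible by $p(p-1)$, and $p(p-1)$ is even for every prime $p$ (either $p=2$, or $p-1$ is even); hence $2$ is a prime divisor of $|G|$.

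Next I would run through the three alternatives that Theorem 3 offers for $r=2$ and a $2$-transitive group of degree $n=p$ not containing the alternating group. Alternative (ii) would require $r=2^{m}-1\geq 7$ to be a Mersenne prime, which is incompatible with $r=2$. Alternative (iii) would require the degree to be $n=2^{m-1}(2^{m}-1)$ with $m>2$; but then $2^{m-1}\geq 4$ and $2^{m}-1\geq 7$, so $n$ would be composite, contradicting the hypothesis that $n=p$ is prime. Hence only alternative (i) survives, and it gives $m_2(G)\geq\frac{2-1}{2}\cdot p=\frac{p}{2}$, as claimed.

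There is essentially no real obstacle here beyond this bookkeeping; the only points needing a moment's attention are confirming that $2\mid |G|$ (so the hypothesis ``$r$ a prime divisor of $|G|$'' is met) and noting that the degree appearing in case (iii) is automatically composite, which is precisely what excludes it. I would also remark that the uniform estimate $m_r(G)\geq\frac{r-1}{r+1}\,n$ at the end of Theorem 3 only yields $m_2(G)\geq p/3$, so discarding the exceptional cases explicitly is what upgrades the constant from $\tfrac13$ to $\tfrac12$.
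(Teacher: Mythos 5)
Your proof is correct and is exactly the argument the paper has in mind: it states Corollary 2 as an immediate special case of Theorem 3, and your verification that $2$ divides $|G|$ and that the exceptional cases (ii) and (iii) are ruled out (wrong prime $r$, respectively composite degree) is precisely the routine bookkeeping being left to the reader. Your closing remark that the uniform bound $\frac{r-1}{r+1}n$ would only give $p/3$ correctly identifies why the case analysis, rather than the final sentence of Theorem 3, is what yields the constant $\frac12$.
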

%
%
\begin{theorem}
    Let $f$ be an irreducible polynomial of prime degree $p\geq 5$ over $\QQ$.
    Suppose that $f$ has precisely $k>0$ pairs of complex roots.
    If $p\geq 4k+1$ then
    $G=\Gal(f/\QQ)$ is isomorphic to $A_{p}$ or to $S_{p}$. Clearly, if $k$ is odd then $G\cong S_{p}$.
\end{theorem}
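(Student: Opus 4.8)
The plan is to combine the ingredients already assembled in the excerpt. Since $f$ is irreducible of prime degree $p\geq 5$, its Galois group $G=\Gal(f/\QQ)$, acting on the $p$ roots, is a transitive permutation group of degree $p$ whose order is divisible by $p$, hence $|G|>p$ once $f$ has a pair of complex roots (complex conjugation is a nontrivial element). The hypothesis $p\geq 4k+1>2k+1$ forces $f$ to have at least one real root; in fact at least $p-2k\geq 2k+1\geq 3$ real roots. By Corollary~1 (or directly by Theorem~1), $G$ is not solvable, so by Burnside's theorem (Theorem~2) $G$ is $2$-transitive of degree $p$.

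Next I would produce an element of small support. Complex conjugation $\sigma$ fixes the (at least) $p-2k$ real roots and interchanges the $k$ pairs of complex roots, so $\sigma$ is a product of $k$ transpositions and $|\supp(\sigma)|=2k$. In particular $\sigma$ is a nonidentity $2$-element of $G$, so $2\mid|G|$ and $m_2(G)\leq 2k$. Now suppose, for contradiction, that $G$ does not contain $A_p$. Then $G$ is a $2$-transitive group of prime degree $p$ not containing the alternating group, so Corollary~2 applies and gives $m_2(G)\geq p/2$. Combining, $p/2\leq m_2(G)\leq 2k$, i.e. $p\leq 4k$, contradicting $p\geq 4k+1$. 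Hence $G\geq A_p$, and since $G\leq S_p$ we get $G\cong A_p$ or $G\cong S_p$.

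For the final sentence: if $k$ is odd, then $\sigma$ is an odd permutation (a product of an odd number of transpositions), so $G\not\subseteq A_p$, which rules out $G\cong A_p$ and forces $G\cong S_p$.

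There is no real obstacle here — the theorem is essentially a bookkeeping assembly of Theorem~1, Theorem~2 and Corollary~2 (the latter being the special case of H\"ochsmann's theorem that does the heavy lifting). The one point requiring a line of care is justifying that $f$ has at least $p-2k$ real roots and that this number is at least $1$ (so that the non-solvability argument via Corollary~1 kicks in) — but this is immediate from counting roots over $\CC$, and the inequality $p\geq 4k+1$ makes the gap comfortable. I would also remark that the bound is sharp in spirit: the estimate $m_2(G)\geq p/2$ is exactly what is needed, and nothing in the argument wastes it.
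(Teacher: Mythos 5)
Your proposal is correct and follows essentially the same route as the paper's own proof: non-solvability via Corollary~1 (using that $f$ has more than one real root), $2$-transitivity via Burnside's theorem, and then the bound $m_2(G)\geq p/2$ from Corollary~2 played against $m_2(G)\leq 2k$ coming from complex conjugation. The extra details you supply (counting the real roots, the parity observation for odd $k$) are fine and only make explicit what the paper leaves implicit.
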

\begin{proof}
    Complex conjugation has support $2k$, hence $m_2(G)\leq 2k$. By Corollary 1, $G$ is not solvable ($f$ has more than
    one real root). By Theorem 3, $G$ is $2$-transitive and, by Corollary 2, $G$ necessarily contains the alternating group.
\end{proof}
Therefore, the algorithm given in [2] for computing the Galois group
of an irreducible prime degree polynomial, can be improved:
\\
\\
%
%
\textbf{Input:} An irreducible polynomial $f(x)\in\QQ[x]$ of prime degree $p$. \\
\textbf{Output:} The Galois group $\Gal(f/\QQ)$.
\\
begin \\
r:=NumberOfRealRoots(f(x)); \\
k:=(p-r)/2; \\
if $k>0$ and $p\geq 4k+1$ then \\ \indent
   if k is odd then \\ \indent
\ \ \ $\Gal(f/\QQ)=S_{p}$; \\ \indent
   else \\ \indent
\ \ \ if $\Delta(f)$ is a complete square then \\ \indent
\ \ \ \ \ \ $\Gal(f/\QQ)=A_{p}$; \\ \indent
\ \ \ else \\ \indent
\ \ \ \ \ \ $\Gal(f/\QQ)=S_{p}$; \\ \indent
\ \ \  endif; \\ \indent
endif; \\
else  \\ \indent
\ \ ReductionMethod(f(x)); \\
endif \\
end;
%
%
\begin{remark}
    \emph{
    $\Delta(f)$ denotes the discriminant of $f(x)$. It is well known that if $f$ is a polynomial of degree $n$
    with coefficients in a field $K$, char($K$)$\neq 2$, then
    $\Delta(f)$ is a perfect square in $K$ if and only if $\Gal(f/K)$ is
    isomorphic to a subgroup of $A_{n}$. See e.g., Stewart [18, Theorem 22.7].}
\end{remark}
%
%
\begin{remark}
    \emph{
    A short discussion on the reduction modulo $p$ method, can be found in [2] and in Cox [6, page 401].}
\end{remark}
%
%
\begin{remark}
    \emph{
    Corollary 1 in [2] can also be improved: (replace their $r$ with our $k$ - the number of pairs of the complex roots
    of a given irreducible polynomial of prime degree $p$). (i) $k=2$ and $p>7$.
    (ii) $k=3$ and $p>11$. (iii) $k=4$ and $p>13$. (iv) $k=5$ and $p>19$. }
\end{remark}
%
%

\section{ non-real realization of $F_{pn}$}
As stated in Corollary 1, an irreducible solvable polynomial of
prime degree $p\geq 5$ over $\QQ$, which has complex roots, has a
Frobenius group of degree $p$ (and of even order, of course) as its
Galois group over $\QQ$. We shall prove that the related "inverse
problem" has a positive answer - any Frobenius group of degree $p$
and of even order appears as Galois group of an irreducible
polynomial of degree $p$ over $\QQ$ having complex roots.
\begin{theorem}\emph{(Dirichlet)}
    Let $k$,$h$ be integers such that $k>0$ and $(h,k)=1$. Then there are infinitely many primes in the arithmetic
    progression $nk+h$, $n=0,1,2,\ldots$.
\end{theorem}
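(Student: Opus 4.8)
The plan is to give the classical analytic proof via Dirichlet characters and their $L$-series. First I would fix the modulus $k$ and introduce the group of Dirichlet characters modulo $k$, that is, the homomorphisms $\chi\colon(\ZZ/k\ZZ)^{*}\to\CC^{*}$, each extended to all of $\ZZ$ by setting $\chi(n)=0$ whenever $\gcd(n,k)>1$. To each such $\chi$ one attaches the Dirichlet $L$-function
\[
L(s,\chi)=\sum_{n=1}^{\infty}\frac{\chi(n)}{n^{s}}=\prod_{p}\left(1-\frac{\chi(p)}{p^{s}}\right)^{-1},
\]
the product ranging over all primes and the identity valid for $\Re(s)>1$ by complete multiplicativity of $\chi$ together with unique factorization.

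Next I would take logarithms of the Euler product. For $\Re(s)>1$ one has $\log L(s,\chi)=\sum_{p}\chi(p)p^{-s}+R(s,\chi)$, where the remainder $R$ collects the contributions of the higher prime powers and stays bounded as $s\to 1^{+}$. Invoking the orthogonality relations for the characters of the finite abelian group $(\ZZ/k\ZZ)^{*}$, one obtains, for any $h$ with $\gcd(h,k)=1$,
\[
\frac{1}{\varphi(k)}\sum_{\chi}\overline{\chi(h)}\,\log L(s,\chi)=\sum_{p\equiv h\ (\mathrm{mod}\ k)}\frac{1}{p^{s}}+O(1)\qquad(s\to 1^{+}).
\]
The strategy is then to show that the left-hand side tends to $+\infty$, which forces the prime sum on the right to diverge and hence the progression $h,h+k,h+2k,\dots$ to contain infinitely many primes.

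The divergence of the left-hand side is governed by the behaviour of the individual terms $\log L(s,\chi)$ as $s\to 1^{+}$. For the principal character $\chi_{0}$ the function $L(s,\chi_{0})$ differs from the Riemann zeta function only by the finitely many Euler factors at primes dividing $k$, so it inherits the simple pole at $s=1$, whence $\log L(s,\chi_{0})\to+\infty$. For every non-principal $\chi$, by contrast, the series $\sum\chi(n)/n^{s}$ converges in a neighbourhood of $s=1$ (by partial summation, since the partial sums of $\chi(n)$ are bounded) and defines there an analytic function; provided one also knows $L(1,\chi)\neq 0$, the term $\log L(s,\chi)$ remains bounded as $s\to 1^{+}$. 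Granting this, only the $\chi_{0}$ contribution survives in the limit and the left-hand side indeed diverges.

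The heart of the matter, and the step I expect to be the main obstacle, is therefore the non-vanishing $L(1,\chi)\neq 0$ for every non-principal $\chi$. For complex characters (those with $\chi\neq\overline{\chi}$) this can be handled by a counting argument: a zero of $L(1,\chi)$ would force a zero of $L(1,\overline{\chi})$ as well, and feeding this back into the product $\prod_{\chi}L(s,\chi)$, which has non-negative Dirichlet coefficients and a pole coming from $\chi_{0}$, produces a contradiction. The genuinely delicate case is that of a real (quadratic) character, where this two-zero device is unavailable; here I would instead study the product $\zeta(s)L(s,\chi)$, whose associated Dirichlet series has non-negative coefficients, and show that the hypothesis $L(1,\chi)=0$ would cancel the pole of $\zeta$, rendering the product analytic in a manner incompatible with the divergence of its coefficients. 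Once the non-vanishing is secured in all cases, the argument of the preceding paragraphs completes the proof.
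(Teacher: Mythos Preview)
Your outline is correct and follows the classical analytic proof via Dirichlet $L$-functions, which is precisely the argument found in the references the paper cites (Serre and Apostol). The paper itself does not supply an independent proof; it simply refers the reader to those sources, so your approach is in full agreement with what the paper invokes.
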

\begin{proof}
    See e.g., Serre [15] or Apostol [1].
\end{proof}
\begin{lemma}
    Let $l$ be a positive integer, and let $\zeta$ be a primitive $l$-th root of unity.
    Then $1,\zeta,\ldots,\zeta^{\varphi(l)-1}$ form a $\ZZ$-basis for the ring of integers of
    $\QQ(\zeta)$.
\end{lemma}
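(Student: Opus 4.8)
The plan is to prove the stronger ring-theoretic statement that $\ZZ[\zeta]$ is the full ring of integers $\mathcal{O}_{\QQ(\zeta)}$; the basis claim follows at once, since the minimal polynomial of $\zeta$ over $\QQ$ is the $l$-th cyclotomic polynomial $\Phi_l$, of degree $\varphi(l)$, so $1,\zeta,\ldots,\zeta^{\varphi(l)-1}$ is visibly a free $\ZZ$-basis of $\ZZ[\zeta]$. I would first settle the prime-power case $l=p^r$ and then bootstrap to general $l$ by a compositum argument.

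For $l=p^{r}$, write $K=\QQ(\zeta)$ and $n=\varphi(p^{r})$. The starting point is $\Phi_{p^{r}}(1)=p$, together with the observation that for every $a$ prime to $p$ both $(1-\zeta^{a})/(1-\zeta)=1+\zeta+\cdots+\zeta^{a-1}$ and its inverse lie in $\ZZ[\zeta]$, so $1-\zeta^{a}$ and $1-\zeta$ are associates in $\ZZ[\zeta]$. Taking the product over $a\in(\ZZ/p^{r}\ZZ)^{*}$ gives $p=u\,(1-\zeta)^{n}$ with $u\in\ZZ[\zeta]^{*}$; passing to ideals of $\mathcal{O}_{K}$ and comparing absolute norms forces $(1-\zeta)\mathcal{O}_{K}$ to be a prime ideal $\mathfrak{p}$ of residue degree $1$ with $(p)=\mathfrak{p}^{n}$, so $p$ is totally ramified and $1-\zeta$ is a uniformizer. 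Next, since $\Phi_{p^{r}}$ divides $X^{p^{r}}-1$, its reduction modulo any prime $q\neq p$ is separable, so no such $q$ divides $\operatorname{disc}(\Phi_{p^{r}})$ and hence none divides $[\mathcal{O}_{K}:\ZZ[\zeta]]$ (by the identity $\operatorname{disc}(\Phi_{p^{r}})=[\mathcal{O}_{K}:\ZZ[\zeta]]^{2}\operatorname{disc}(\mathcal{O}_{K})$); thus $p^{N}\mathcal{O}_{K}\subseteq\ZZ[\zeta]$ for some $N$. Finally, because the residue degree is $1$, reduction modulo $\mathfrak{p}$ shows $\mathcal{O}_{K}=\ZZ+(1-\zeta)\mathcal{O}_{K}\subseteq\ZZ[\zeta]+(1-\zeta)\mathcal{O}_{K}$; multiplying this equality repeatedly by $1-\zeta$ yields $\mathcal{O}_{K}=\ZZ[\zeta]+(1-\zeta)^{j}\mathcal{O}_{K}$ for all $j\geq 1$, and taking $j=nN$, so that $(1-\zeta)^{nN}\mathcal{O}_{K}=p^{N}\mathcal{O}_{K}\subseteq\ZZ[\zeta]$, gives $\mathcal{O}_{K}=\ZZ[\zeta]$.

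For general $l=p_{1}^{r_{1}}\cdots p_{s}^{r_{s}}$, I would use the standard fact that a primitive $l$-th root of unity can be written as a product $\zeta=\prod_{i}\zeta_{i}$ of primitive $p_{i}^{r_{i}}$-th roots of unity, so $\QQ(\zeta)$ is the compositum of the fields $K_{i}=\QQ(\zeta_{i})$, and these are linearly disjoint over $\QQ$ because $\varphi$ is multiplicative, whence $[\QQ(\zeta):\QQ]=\prod_{i}[K_{i}:\QQ]$. By the prime-power computation (via the norm of $\Phi_{p_{i}^{r_{i}}}'(\zeta_{i})$), $\operatorname{disc}(K_{i})$ is a power of $p_{i}$ up to sign, so the $\operatorname{disc}(K_{i})$ are pairwise coprime. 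Invoking the elementary lemma that for number fields $K,L$ with $[KL:\QQ]=[K:\QQ][L:\QQ]$ and $\gcd(\operatorname{disc}K,\operatorname{disc}L)=1$ one has $\mathcal{O}_{KL}=\mathcal{O}_{K}\mathcal{O}_{L}$ (and $\operatorname{disc}(KL)$ is a product of powers of $\operatorname{disc}K$ and $\operatorname{disc}L$, which keeps the coprimality hypothesis available for the induction), an induction on $s$ gives $\mathcal{O}_{\QQ(\zeta)}=\prod_{i}\mathcal{O}_{K_{i}}=\prod_{i}\ZZ[\zeta_{i}]=\ZZ[\zeta]$, as required.

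The crux, and the only place a genuinely arithmetic argument is needed, is showing $p\nmid[\mathcal{O}_{K}:\ZZ[\zeta]]$ in the prime-power case: the discriminant bookkeeping only excludes the primes $q\neq p$, and it is the total ramification of $p$ together with the fact that $1-\zeta$ generates the prime above it that lets the filtration by powers of $(1-\zeta)$ close the remaining gap. The compositum step is routine but one must be careful that it is linear disjointness, supplied by the multiplicativity of $\varphi$, that is being used, not mere coprimality of degrees, which can fail (for instance $\varphi(3)=\varphi(4)=2$).
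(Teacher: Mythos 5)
Your argument is correct. Note, however, that the paper does not actually prove this lemma: it simply cites Neukirch [12, Chapter I, Proposition 10.2], and the proof you give is essentially the standard one presented in such references --- first the prime-power case $l=p^{r}$, where $\Phi_{p^r}(1)=p$ and the fact that the $1-\zeta^{a}$ ($a$ prime to $p$) are associates show that $1-\zeta$ generates a totally ramified prime of residue degree $1$, while the discriminant--index identity rules out all primes $q\neq p$ from the index $[\mathcal{O}_K:\ZZ[\zeta]]$, and the filtration $\mathcal{O}_K=\ZZ[\zeta]+(1-\zeta)^{j}\mathcal{O}_K$ closes the gap at $p$; then the general case by composita with pairwise coprime discriminants. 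Two small points worth spelling out if you write this up in full: the inverse of $(1-\zeta^{a})/(1-\zeta)$ lies in $\ZZ[\zeta]$ because $\zeta=(\zeta^{a})^{b}$ for $ab\equiv 1 \pmod{p^{r}}$, so $(1-\zeta)/(1-\zeta^{a})=1+\zeta^{a}+\cdots+\zeta^{a(b-1)}$; and the linear disjointness step uses $[\QQ(\zeta_l):\QQ]=\varphi(l)$, i.e.\ the irreducibility of $\Phi_l$, which is a separate (standard) input --- though it is in any case presupposed by the very statement of the lemma, since otherwise $1,\zeta,\ldots,\zeta^{\varphi(l)-1}$ could not be a basis.
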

\begin{proof}
    See e.g., Neukirch [12, Chapter I, Proposition 10.2].
\end{proof}
\begin{lemma}\emph{(Galois)}
    Let $f$ be a polynomial of prime degree over $\QQ$.
    Then, $f$ is solvable by radicals if and only if any two distinct roots of f generate its splitting field.
\end{lemma}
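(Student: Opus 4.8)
The plan is to translate, via the Galois correspondence, both sides of the asserted equivalence into a single condition on two-point stabilizers in $G=\Gal(f/\QQ)$, and then invoke Theorem 1. We may assume $f$ is irreducible (the setting in which the lemma is used) and that $p\geq 5$, the cases $p=2,3$ being immediate. View $G$ as a transitive permutation group on the set $R$ of the $p$ roots of $f$, and let $L$ be the splitting field of $f$. For $\alpha\in R$ the orbit--stabilizer relation gives $[G:G_\alpha]=p=[\QQ(\alpha):\QQ]$, hence $|G_\alpha|=[L:\QQ(\alpha)]$; since $G_\alpha$ fixes $\QQ(\alpha)$ pointwise, a comparison of degrees forces $\operatorname{Fix}(G_\alpha)=\QQ(\alpha)$. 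The correspondence then gives, for distinct $\alpha,\beta\in R$, that $\operatorname{Fix}(G_\alpha\cap G_\beta)=\operatorname{Fix}(G_\alpha)\cdot\operatorname{Fix}(G_\beta)=\QQ(\alpha,\beta)$. Therefore ``$\QQ(\alpha,\beta)=L$ for all distinct $\alpha,\beta\in R$'' is equivalent to ``$G_\alpha\cap G_\beta=1$ for all distinct $\alpha,\beta\in R$''.

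Next I would show that for our $G$ this last group-theoretic condition is equivalent to $G$ being solvable. If $|G|=p$, then $G$ is regular and cyclic, hence solvable, and $G_\alpha=1$, so both conditions hold trivially. If $|G|>p$, then $G$ is not regular, so $G_\alpha\neq 1$; hence ``$G_\alpha\cap G_\beta=1$ for all distinct $\alpha,\beta$'' is exactly the statement that $G$ is a Frobenius group of degree $p$, and by Theorem 1 this holds precisely when $G$ is solvable. Combining this with the first paragraph and Galois's solvability criterion (``$f$ solvable by radicals $\iff$ $G$ solvable'', valid in characteristic $0$), we get: $f$ is solvable by radicals $\iff$ $G$ is solvable $\iff$ $G_\alpha\cap G_\beta=1$ for all distinct roots $\iff$ any two distinct roots of $f$ generate $L$.

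Because Theorem 1 is available as a black box, the argument contains no serious obstacle; the only points that demand care are the identification $\operatorname{Fix}(G_\alpha)=\QQ(\alpha)$ --- which is exactly where irreducibility is used --- together with the compositum/intersection bookkeeping in the Galois correspondence, and the separate treatment of the degenerate case $|G|=p$, where $G$ has order $p$ rather than order $>p$ so that Theorem 1 does not literally apply but the conclusion is transparent. Everything else is a routine passage through the fundamental theorem of Galois theory.
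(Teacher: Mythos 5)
Your argument is correct, but it is worth noting that the paper itself gives no proof at all here: it simply cites Cox [6, Theorem 14.1.1]. What you have written is a self-contained derivation from material already present in the paper, and it hangs together: the identification $\operatorname{Fix}(G_\alpha)=\QQ(\alpha)$ follows from the degree count $[G:G_\alpha]=p=[\QQ(\alpha):\QQ]$ (this is exactly where transitivity, i.e.\ irreducibility, enters), the fundamental theorem does send $G_\alpha\cap G_\beta$ to the compositum $\QQ(\alpha)\cdot\QQ(\beta)=\QQ(\alpha,\beta)$, and with $|G|>p$ the condition $G_\alpha\cap G_\beta=1$ for all pairs of distinct roots is precisely the Frobenius property, so Theorem 1 converts it into solvability of $G$; the degenerate case $|G|=p$ and the primes $p=2,3$ are handled correctly, and the final passage through ``$f$ solvable by radicals $\iff$ $G$ solvable'' is the standard characteristic-zero criterion. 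The gain of your route is that the lemma is seen to be an immediate corollary of Theorem 1 (Galois's theorem on transitive groups of prime degree), which the paper already quotes, rather than an independent imported fact; the cost is that you must assume irreducibility, which the paper's statement omits. That assumption is genuinely needed: for a reducible example such as $(x^2-2)(x^3-2)$ of degree $5$, the polynomial is solvable by radicals while the two roots $\pm\sqrt{2}$ generate only $\QQ(\sqrt{2})$, not the splitting field. So your reading of the lemma (irreducible $f$, as in Cox's formulation and as in its application to the minimal polynomial of $\beta$ in Theorem 7) is the right one, and your proof is a legitimate, essentially standard replacement for the citation.
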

\begin{proof}
    See Cox [6, Theorem 14.1.1].
\end{proof}
\begin{theorem}\emph{(Scholz)}
    A splitting embedding problem has a proper solution over number fields. (That is, let $K$ be a number field and let
    $M/K$ be a Galois extension with Galois group $H$. Suppose that $H$ acts on an abelian group $A$. Then, there exist
    a Galois extension $L/K$ which contains $M/K$ such that $\Gal(L/K)\cong A\rtimes H$).
\end{theorem}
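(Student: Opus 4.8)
The statement is a split embedding problem, so I would separate the task into the existence of \emph{some} solution, which is free of charge, and \emph{properness} (surjectivity) of a solution, which carries all the content. Write $E:=A\rtimes H$, let $\pi\colon E\twoheadrightarrow H$ be the projection, put $G_K:=\Gal(\overline{K}/K)$, and let $\rho\colon G_K\twoheadrightarrow H$ be the surjection cutting out $M/K$; view $A$ as a finite continuous $G_K$-module via $\rho$ followed by the given action of $H$ on $A$. A solution is a continuous homomorphism $\psi\colon G_K\to E$ with $\pi\circ\psi=\rho$, and it is proper exactly when $\psi$ is onto. Since $\pi$ is split by a group section $s\colon H\hookrightarrow E$, the map $\psi_0:=s\circ\rho$ is already a solution, so there is no $H^2$-obstruction; and every solution has the shape $\psi_c(g)=c(g)\,\psi_0(g)$ for a continuous cocycle $c\in Z^1(G_K,A)$, with $\psi_c$ and $\psi_{c'}$ conjugate under $A$ iff $[c]=[c']$ in $H^1(G_K,A)$. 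So the whole problem is to choose the cohomology class so that $\psi_c$ is surjective.

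The clean case, and the only one needed for the application below, is $A$ a simple $\FF_\ell[H]$-module. For $g\in\ker\rho$ one has $\psi_0(g)=1$, hence $\psi_c(g)=c(g)\in A$; since $\ker\rho$ acts trivially on $A$, a one-line cocycle computation shows $c|_{\ker\rho}$ is an $H$-equivariant homomorphism $\ker\rho\to A$, and then $\operatorname{im}(\psi_c)\cap A=c(\ker\rho)$, an $H$-submodule of $A$. For $A$ simple this is $0$ or $A$, and it is $A$ precisely when $c|_{\ker\rho}\neq 0$, in which case $\operatorname{im}(\psi_c)$ contains $A$ and surjects onto $H$, so $\operatorname{im}(\psi_c)=E$ --- a proper solution. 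By the inflation--restriction sequence, a class with $c|_{\ker\rho}\neq 0$ exists as soon as $H^1(G_K,A)$ strictly contains the image of the inflation map from the finite group $H^1(H,A)$; and over a number field $H^1(G_K,A)$ is infinite for every nontrivial finite module $A$ --- equivalently $M$ has infinitely many pairwise linearly disjoint $A$-extensions that are Galois over $K$ --- a standard consequence of class field theory and the Grunwald--Wang theorem. This settles the simple-kernel case; for the realization of $F_{pn}$ below one takes $H$ cyclic and $A=\FF_p$, which is $1$-dimensional and hence simple, so this already suffices there.

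For general finite abelian $A$ one proceeds by d\'{e}vissage. Splitting off the $H$-stable $\ell$-primary parts of $A$ and taking the compositum of the corresponding solution fields (kept linearly disjoint over $M$ using the ramification freedom) reduces to $A$ an $\ell$-group; if moreover $A$ is semisimple as an $\FF_\ell[H]$-module, writing it as a sum of simple submodules and again forming a compositum finishes. The genuinely hard case --- which does not occur in this paper --- is $A$ non-semisimple: one filters $A$ by $H$-submodules $0=A_0\subset\cdots\subset A_r=A$ with simple quotients and climbs the resulting tower of embedding problems $1\to A_{i+1}/A_i\to (A/A_i)\rtimes H\to (A/A_{i+1})\rtimes H\to 1$ one layer at a time. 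Here a layer need \emph{not} be split --- it splits exactly when the module sequence $0\to A_{i+1}/A_i\to A/A_i\to A/A_{i+1}\to 0$ splits over $H$ --- so the previous paragraph does not apply verbatim, and, worse, the $H^2$-obstruction of a layer depends on which proper solution of the preceding layer was chosen; controlling it is the main obstacle and is precisely where the arithmetic of number fields is indispensable. This is Shafarevich's argument: among the infinitely many choices for the previously built field one uses Grunwald--Wang and Chebotarev to prescribe ramification and local splitting so that the layer obstruction is locally trivial everywhere, and then Poitou--Tate duality for finite Galois modules (absorbing, if necessary, a residual Tate--Shafarevich term by a further adjustment of the lower layers) forces the global obstruction to vanish; properness of the layer then follows as in the simple case, and assembling the tower yields the desired $L/K$. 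I would not reproduce this last argument --- it is carried out in full in Ishkhanov--Lur'e--Faddeev's monograph on the embedding problem and in Neukirch--Schmidt--Wingberg --- and would simply cite it.
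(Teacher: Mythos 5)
Your proposal is essentially correct, but it takes a very different route from the paper, whose entire ``proof'' of this theorem is the citation ``See Scholz [14]''; you instead reconstruct the argument. Your reduction is sound: since the extension $A\rtimes H\to H$ splits, solutions correspond to classes in $H^1(G_K,A)$ twisting $s\circ\rho$, the restriction $c|_{\ker\rho}$ is a well-defined $H$-map (independent of the representative, as $\ker\rho$ acts trivially), $\operatorname{im}(\psi_c)\cap A=c(\ker\rho)$, and for simple $A$ properness is exactly $c|_{\ker\rho}\neq 0$, which inflation--restriction plus the infinitude of $H^1(G_K,A)$ guarantees. As you observe, this self-contained part already covers the only use the paper makes of the theorem (Theorem 7 needs $A=\FF_p$ with $H=C_n$ acting through $\FF_p^{*}$, a simple module), so your argument is complete for the paper's purposes modulo the standard fact about $H^1$ over number fields. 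For general abelian kernel you correctly identify the non-semisimple layers as the hard point and defer to Ishkhanov--Lur'e--Faddeev and Neukirch--Schmidt--Wingberg, which is legitimate; it is worth noting, though, that the heavy Shafarevich/Poitou--Tate machinery is not strictly necessary for the statement as quoted: a split embedding problem with abelian kernel over any Hilbertian field (in particular any number field) is properly solvable, which gives a softer alternative for the general case. The trade-off is clear: the paper's citation is shortest, while your route isolates exactly which special case the application needs and makes that case transparent.
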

\begin{proof}
    See Scholz [14].
\end{proof}
\begin{theorem}
    Let $F_{pn}$ be a Frobenius group of degree $p$ and of even order.
    Then $F_{pn}$ occurs as Galois group of an irreducible polynomial $f$ of degree $p$ over
    $\QQ$ having complex roots.
    Furthermore, the splitting field of $f$ is $\QQ(a,\mathbf{i}b)$ for every complex root $a+\mathbf{i}b$ of $f$.
\end{theorem}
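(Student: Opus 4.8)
The plan is to realise $F_{pn}$ as a Galois group by solving a splitting embedding problem (Scholz's theorem), arranging the top field to be non-real so that the associated degree-$p$ polynomial acquires complex roots; the assertion about the splitting field then drops out. Throughout, write $F_{pn}=\FF_{p}\rtimes U$, where $U\le\FF_{p}^{*}$ is the unique cyclic subgroup of order $n$, acting on $\FF_{p}$ by multiplication. Since $|F_{pn}|=pn$ is even and $p$ is odd, $n$ is even; also $n\mid p-1$.

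The first step is to produce a cyclic extension $M/\QQ$ of degree $n$ that is not contained in $\mathbb{R}$. By Dirichlet's theorem (Theorem 4) choose a prime $q$ with $q\equiv 1+n\pmod{2n}$ --- legitimate because $\gcd(1+n,2n)=1$ for even $n$ --- so that $n\mid q-1$ while $2n\nmid q-1$. Let $M$ be the unique subfield of $\QQ(\zeta_{q})$ with $[M:\QQ]=n$; it is cyclic over $\QQ$ since $\Gal(\QQ(\zeta_{q})/\QQ)$ is cyclic. As $\QQ(\zeta_{q})\cap\mathbb{R}$ is the real cyclotomic subfield of degree $(q-1)/2$, and $2n\nmid q-1$ gives $n\nmid(q-1)/2$, we get $M\not\subseteq\mathbb{R}$. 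Next, let $H=\Gal(M/\QQ)$ act on the abelian group $A=\FF_{p}$ through an isomorphism $H\cong U$ followed by $U\hookrightarrow\FF_{p}^{*}=\operatorname{Aut}(\FF_{p})$. By Scholz's theorem (Theorem 5) there is a Galois extension $L/\QQ$ containing $M$ with $\Gal(L/\QQ)\cong A\rtimes H\cong\FF_{p}\rtimes U=F_{pn}$.

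Now identify $G=\Gal(L/\QQ)$ with $F_{pn}$ in its natural degree-$p$ action on the cosets of a Frobenius complement $U'$ (a point stabiliser), put $K=L^{U'}$ so that $[K:\QQ]=p$, and let $f\in\QQ[X]$ be the minimal polynomial over $\QQ$ of a primitive element $\theta$ of $K$. Then $f$ is irreducible of degree $p$; and since a Frobenius group acts faithfully on the cosets of a complement (for $\beta\ne\gamma$ one has $G_{\beta}\cap G_{\gamma}=1$, so the core $\bigcap_{g}gU'g^{-1}$ is trivial), $L$ is the splitting field of $f$ and $\Gal(f/\QQ)\cong F_{pn}$ as a transitive permutation group on the $p$ roots. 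Because $M\subseteq L$ and $M\not\subseteq\mathbb{R}$, the field $L$ is non-real, so complex conjugation restricts to an involution $c\ne 1$ of $G$; were every root of $f$ real, $c$ would fix all $p$ roots and hence lie in the trivial core, which is impossible --- so $f$ has a complex root $a+\mathbf{i}b$ with $b\ne 0$. Finally, any two distinct roots of $f$ generate $L$ --- directly, because the stabiliser of two distinct roots in the Frobenius group $G$ is trivial, or via Lemma 3, since $F_{pn}$ is solvable and hence $f$ is solvable by radicals. Applying this to the distinct roots $a+\mathbf{i}b$ and $a-\mathbf{i}b$ yields $L=\QQ(a+\mathbf{i}b,\,a-\mathbf{i}b)=\QQ(a,\mathbf{i}b)$, as claimed.

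The step I expect to be the crux is forcing $f$ to have complex roots: Scholz's theorem gives no control over the infinite place, so one cannot directly make the output field non-real, and the device is to smuggle a non-real field into the construction through the base $M$. The only genuinely delicate verification is that a cyclic field of degree $n$ can be non-real for \emph{every} even $n$ --- including $n\equiv 0\pmod{4}$, where the quadratic subfield is necessarily real --- and this is exactly what the congruence $q\equiv 1+n\pmod{2n}$ secures: the degree-$n$ subfield of $\QQ(\zeta_{q})$ is then totally imaginary even though its quadratic subfield is real.
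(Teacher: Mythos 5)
Your proposal is correct and follows essentially the same route as the paper: Dirichlet's theorem to get a prime $q$ with $n\mid q-1$ and $(q-1)/n$ odd, the non-real cyclic degree-$n$ subfield of $\QQ(\zeta_{q})$, Scholz's embedding theorem to reach an $F_{pn}$-extension $L/\QQ$, the minimal polynomial of a primitive element of the degree-$p$ intermediate field together with the trivial-core observation, and the fact that two distinct roots generate $L$ for the final claim. The only difference is local: where the paper verifies non-reality by an explicit Gaussian period and linear independence of the powers of $\zeta_{q}$, you compare degrees with the maximal real subfield $\QQ(\zeta_{q})\cap\mathbb{R}$ of degree $(q-1)/2$, and you replace the appeal to Galois's solvability criterion by the trivial two-point stabilizer property of Frobenius groups --- both are valid, slightly cleaner verifications of the same steps.
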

\begin{proof}
    By Theorem 5, there exist a prime $q$ such that $q\equiv 1(\mod{n})$ and $(q-1)/n$ is odd number.
    Indeed, for every natural number $k$, write $1+(2k-1)n=(1-n)+(2n)k$. So, $(1-n,2n)=1$ since $n$ is even. Thus, such a
    prime $q$ does exist. Let $m$ be a primitive root modulo $q$ (that is, a generator of $\FF_{q}^{*}$).
    Consider the sum
    \begin{equation}
        \alpha_{n}=\zeta_{q}+\zeta_{q}^{m^{n}}+\zeta_{q}^{m^{2n}}+\ldots+\zeta_{q}^{m^{\left(\frac{q-1}{n}-1\right)n}},
    \end{equation}
    where $\zeta_{q}$ is a primitive $q$-th root of unity. Then $\Gal(\QQ(\zeta_{q})/\QQ)$ is cyclic of order $q-1$
    and generated by the automorphism $\sigma:\zeta_{q}\mapsto\zeta_{q}^{m}$. We shall see that $\QQ(\alpha_{n})/\QQ$
    is a non-real $C_{n}$-extension, and then we shall apply Theorem 6. \\
    \textbf{$\mathbf{\QQ(\alpha_{n})/\QQ}$ is a $\mathbf{C_{n}}$-extension:} By the Fundamental Theorem of Galois Theory,
    it is enough to prove $\QQ(\alpha_{n})=\QQ(\zeta_{q})^{\sigma^{n}}$.
    The inclusion $\QQ(\alpha_{n})\subseteq\QQ(\zeta_{q})^{\sigma^{n}}$ is because $\sigma^{n}$ moves cyclicly the
    summands of (1) (in fact, $\alpha_{n}$ is the image of $\zeta_{q}$ under the trace map
    $\Tr_{\QQ(\zeta_{q})/\QQ(\zeta_{q})^{\sigma^n}}$, hence $\alpha_{n}$ is an element of $\QQ(\zeta_{q})^{\sigma^n}$).
    Suppose that $\QQ(\alpha_{n})\subsetneqq\QQ(\zeta_{n})^{\sigma_{n}}$. There exist a proper divisor $d$ of $n$
    such that $\QQ(\alpha_{n})=\QQ(\zeta_{q})^{\sigma^{d}}$. In particular, $\sigma^{d}(\alpha_{n})=\alpha_{n}$, or
    \begin{equation}
        \sum_{j=0}^{\frac{q-1}{n}-1}\zeta_{q}^{m^{jn+d}}-
        \sum_{j=0}^{\frac{q-1}{n}-1}\zeta_{q}^{m^{jn}}=0.
    \end{equation}
    We shall see in a moment that the summands in (2) are distinct in pairs. Taking it as a fact, there are
    $2(q-1)/n$ ($\leq q-1$) summands, and dividing each of them by $\zeta_{q}$
    gives us a linear dependence among the $1,\zeta_{q},\zeta_{q}^2,\ldots,\zeta_{q}^{q-2}$ in contradiction to Lemma 1.
    Now, if $\zeta_{q}^{m^{jn+d}}=\zeta_{q}^{m^{in}}$ for some $i,j=0,1,\ldots,\frac{q-1}{n}-1$, $j\geq i$, then
    $m^{(j-i)n+d}\equiv 1(\mod q)$. $m$ is primitive modulo $q$ so $q-1$ divides $(j-i)n+d$.
    But, $(j-i)n+d<(\frac{q-1}{n}-1)n+n=q-1$, a contradiction. Therefore, all the summands in (2) are distinct in pairs.
    \\
    \textbf{$\mathbf{\alpha_{n}}$ is not real:} No summand in (1) is a complex conjugate of the other.
    Indeed, if
    $\zeta_{q}^{m^{jn}}=\zeta_{q}^{-m^{in}}$ for some $i,j=0,1,\ldots,\frac{q-1}{n}-1$, $j\geq i$, then
    $m^{(j-i)n}\equiv -1(\mod q)$, so $m^{2(j-i)n}\equiv 1(\mod q)$.
    Therefore, the odd number $(q-1)/n$ divides $2(j-i)$, thus divides $j-i$. But $j-i<(q-1)/n$. We conclude that
    no summand in (1) is a complex conjugate of the other.
    Finally, if $\alpha_{n}$ was real, then $\frac{1}{\zeta_{q}}(\alpha_{n}-\overline{\alpha_{n}})=0$ and by the same
    considerations above, we get a contradiction to Lemma 1.
    \\ \indent
    Now by Theorem 6, we can embed the non-real $C_n$-extension $\QQ(\alpha_n)/\QQ$
    in a $F_{pn}$-extension $L/\QQ$ (say). Let $\QQ(\beta)/\QQ$ be an intermediate extension of degree $p$ which
    corresponds to (the isomorphic copy of) $U\cong C_{n}$. No non-trivial subgroup of $U$ is normal in $F_{pn}$, hence
    $L/\QQ$ is the splitting field of the minimal polynomial $f$ of the primitive element $\beta$.
    $f$ is the required polynomial.
    \\ \indent
    If $a+\mathbf{i}b$ is a complex root of $f$ then
    $L=\QQ(a+\mathbf{i}b,a-\mathbf{i}b)=\QQ(a,\mathbf{i}b)$ by Lemma 2 and Corollary 1.
\end{proof}
\begin{remark}
\emph{ Any Frobenius group can be realized as Galois group over
$\QQ$ (the realizations are not necessarily non-real).
I.R.\v{S}afarevi\v{c} [13] proved that any solvable group appears as
Galois group over number fields, and J.Sonn [16,17] proved that any
non-solvable Frobenius group appears as Galois group over $\QQ$.}
\end{remark}
\section{Acknowledgment}
The author is grateful to Moshe Roitman, Jack Sonn, Tanush Shaska
and John Dixon for useful discussions.

%
%
\vskip 1cm
\small
\noindent
Oz Ben-Shimol \\
Department of Mathematics \\
University of Haifa \\
Mount Carmel 31905, Haifa, Israel \\
E-mail Address: obenshim@math.haifa.ac.il
\end{document}